\newcommand{\vertiii}[1]{{\left\vert\kern-0.25ex\left\vert\kern-0.25ex\left\vert #1 
    \right\vert\kern-0.25ex\right\vert\kern-0.25ex\right\vert}}
\newcommand{\RR}{\mathbb{R}}
\newcommand{\CC}{\mathbb{C}}
\newcommand{\ZZ}{\mathbb{Z}}
\newcommand{\QQ}{\mathbb{Q}}
\theoremstyle{plain}
\newtheorem{theorem}{Theorem}[section]
\newtheorem{lemma}[theorem]{Lemma}
\begin{document}

\title{Extension of Wiener-Wintner double recurrence theorem to polynomials}

\author{Idris Assani}
\address{Department of Mathematics, The University of North Carolina at Chapel Hill, 
Chapel Hill, NC 27599}
\email{assani@math.unc.edu}
\urladdr{http://www.unc.edu/math/Faculty/assani/} % Delete if not wanted.

\author{Ryo Moore}
\address{Department of Mathematics, The University of North Carolina at Chapel Hill, 
Chapel Hill, NC 27599}
\email{ryom@live.unc.edu}
\urladdr{http://ryom.web.unc.edu} % Delete if not wanted.

\begin{abstract}
We extend the authors' previous work on Wiener-Wintner double recurrence theorem to the case of polynomials.
\end{abstract}

\maketitle
\section{Introduction}
The following extension on Bourgain's pointwise result on double recurrence \cite{BourgainDR} was proven in \cite{WWDR}.

\begin{theorem}\label{WWDR}
Let $(X, \mathcal{F}, \mu, T)$ be a standard ergodic dynamical system, $a, b \in \ZZ$ such that $a \neq b$, and $f_1, f_2 \in L^\infty(X)$. Let 
\[W_N(f_1, f_2,x ,t) = \frac{1}{N} \sum_{n=0}^{N-1} f_1(T^{an}x)f_2(T^{bn}x) e^{2\pi i n t}. \]
\begin{enumerate}
\item (Double Uniform Wiener-Wintner Theorem) If either $f_1$ or $f_2$ belongs to $\mathcal{Z}_2^\perp$, then there exists a set of full measure $X_{f_1 \otimes f_2}$ such that for all $x \in X_{f_1 \otimes f_2}$,
\begin{equation*}\label{UniformDWW} \limsup_{N \to \infty} \sup_{t \in \RR} \left|W_N(f_1, f_2, x, t) \right| = 0 \end{equation*}
\item (General Convergence) If $f_1, f_2 \in \mathcal{Z}_2$, then for $\mu$-a.e. $x \in X$, $W_N(f_1, f_2, x, t)$ converges for all $t \in \RR$.
\end{enumerate}
\end{theorem}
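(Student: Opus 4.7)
My plan for part (1) is to reduce the uniform Wiener-Wintner statement to Bourgain's pointwise double recurrence theorem via van der Corput's inequality. Setting $u_n(x,t) = f_1(T^{an}x) f_2(T^{bn}x) e^{2\pi i n t}$, the decisive observation is that the $t$-dependence in $u_{n+h}\overline{u_n}$ collapses to the unimodular factor $e^{2\pi i h t}$, which drops out under $|\cdot|$ and $\sup_t$; van der Corput then yields
\[
\sup_{t \in \RR} \big|W_N(f_1,f_2,x,t)\big|^2 \leq \frac{C}{H} + \frac{2}{H}\sum_{h=1}^{H-1} \left|\frac{1}{N}\sum_{n=0}^{N-1} F_h(T^{an}x)\, G_h(T^{bn}x)\right|,
\]
with $F_h(y)=f_1(T^{ah}y)\overline{f_1(y)}$ and $G_h(y)=f_2(T^{bh}y)\overline{f_2(y)}$. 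For each $h$, Bourgain's theorem gives a.e.\ convergence of the inner average to a limit $L_h(x)$, so sending $N \to \infty$ reduces the problem to showing $\frac{1}{H}\sum_{h=1}^{H-1} |L_h(x)| \to 0$ a.e. as $H \to \infty$.

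The hypothesis $f_1 \in \mathcal{Z}_2^\perp$ is then used via Host-Kra--Ziegler theory: one has an $L^2$ estimate of the form $\|L_h\|_{L^2} \leq C\, \vertiii{F_h}_k$ for an appropriate Host-Kra seminorm, and the averaging identity $\frac{1}{H}\sum_h \vertiii{F_h}_k^{2^k} \to \vertiii{f_1}_{k+1}^{2^{k+1}}$, combined with the characterisation $\vertiii{f_1}_3 = 0$ of $\mathcal{Z}_2^\perp$, gives $L^2$-decay of $\frac{1}{H}\sum_h |L_h|$; a dyadic Borel--Cantelli or ergodic maximal argument then upgrades this to a.e.\ decay. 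I expect the principal obstacle to be packaging the countably many exceptional null sets (one per $h$, plus the Bourgain and Host-Kra null sets) into a single full-measure set $X_{f_1 \otimes f_2}$ on which uniform control over all $t \in \RR$ holds.

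For part (2), I plan to invoke the Host-Kra structure theorem, which exhibits $\mathcal{Z}_2$ as an inverse limit of $2$-step nilsystems $G/\Gamma$. By density and standard approximation, it suffices to treat $f_1, f_2$ continuous on such a nilmanifold. The orbit $n \mapsto (T^{an}x, T^{bn}x, nt \bmod 1)$ is then a polynomial sequence on $G/\Gamma \times G/\Gamma \times \mathbb{T}$, so Leibman's pointwise equidistribution theorem for polynomial orbits on nilmanifolds yields convergence of $W_N(f_1,f_2,x,t)$ for every $t$ on a full measure set in $x$; handling $t$ first on a countable dense subset of $\RR$ and then extending by an equicontinuity estimate in $t$ delivers the stated ``for all $t \in \RR$'' conclusion.
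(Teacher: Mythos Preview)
This theorem is not proved in the present paper; it is quoted from the authors' earlier work \cite{WWDR}. What the paper does supply is the key integral estimate (Theorem~\ref{Thm-estimate}) extracted from that proof, and in Section~\ref{sec:CVinZ2} it sketches the argument for part~(2) in the course of proving the polynomial generalisation. So the comparison is against the method \emph{indicated} rather than a full proof written out here.

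\textbf{Part (1).} Your strategy is correct and is essentially the one underlying Theorem~\ref{Thm-estimate}: van der Corput kills the $t$-dependence, and then Host--Kra seminorms control the shifted products $F_h = f_1\cdot f_1\circ T^{ah}$. The packaging, however, differs. You pass through Bourgain's theorem to obtain pointwise limits $L_h(x)$, bound $\|L_h\|_{L^2}$ by a seminorm, and then invoke a Borel--Cantelli or maximal step to convert $L^2$-decay of $\frac{1}{H}\sum_h|L_h|$ into a.e.\ decay. The paper's route is shorter: it keeps the $\limsup_N\sup_t$ inside the integral from the outset, obtaining
\[
\int \limsup_{N}\sup_{t}\bigl|W_N\bigr|^2\,d\mu \;\lesssim\; \frac{1}{H}+\Bigl(\frac{1}{H}\sum_{h=1}^{H}\vertiii{f_1\cdot f_1\circ T^{ah}}_{2}^{2}\Bigr)^{1/2},
\]
and then lets $H\to\infty$ \emph{in the integral inequality} to reach $\vertiii{f_1}_3^2$ on the right. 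Since the integrand is nonnegative and the bound is zero when $f_1\in\mathcal{Z}_2^\perp$, the a.e.\ conclusion is immediate---no appeal to Bourgain's convergence theorem, no Borel--Cantelli. Your version works (Fatou on $\liminf_H$ already suffices, so the dyadic Borel--Cantelli is overkill), but the integrated approach is cleaner and is what the paper records.

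\textbf{Part (2).} Here your route genuinely diverges from the paper's. Following \cite{WWDR}, the paper splits into irrational and rational $t$. For irrational $t$ it builds a character $\overline{\phi}_t$ on the Kronecker factor $Z_1$ with $\overline{\phi}_t(\beta)=e^{2\pi i t}$, lifts it to $f_t=\overline{\phi}_t\circ\pi$ on $Z_{k+1}$, and rewrites the weight $e^{2\pi i n t}$ as $f_t(T^nx)/f_t(x)$, so that Leibman's theorem applies to the triple nil-average $\frac{1}{N}\sum f_1(T^{an}x)f_2(T^{bn}x)f_t(T^nx)$; rational $t$ are handled separately and then intersected over $\QQ$. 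Your proposal---treating $n\mapsto(T^{an}x,T^{bn}x,nt)$ directly as a polynomial sequence on $(G/\Gamma)^2\times\mathbb{T}$---is a legitimate and arguably more economical alternative that avoids the irrational/rational dichotomy and the character-extension step. One correction: your final ``equicontinuity in $t$'' manoeuvre is unnecessary. Leibman's theorem gives convergence at \emph{every} point of the nilmanifold, so for continuous $f_1,f_2$ on a fixed $2$-step factor the averages converge for \emph{all} $(x,t)$ with no exceptional set; the only null set enters through the $L^2$-approximation of $f_1,f_2$ by such continuous functions, and that step (via the maximal inequality, exactly as in the paper's display~(\ref{CS-Est})--(\ref{thirdTerm})) is already uniform in $t$.
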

One of the estimates that was established to prove the uniform convergence result above was the following (this was obtained in the  proof of Theorem 5.1 of \cite{WWDR}):
\begin{theorem}\label{Thm-estimate}
Let $(X, \mathcal{F}, \mu, T)$ be a standard ergodic dynamical system, and $f_1, f_2 \in L^\infty(X)$. If $\vertiii{\cdot}_{k+1}$ denotes the $k$-th Gowers-Host-Kra seminorm \cite{Gowers,HostKraNEA}, then
\begin{equation}\label{estimate}
\int \limsup_{N \to \infty} \sup_{t \in \RR} \left| \frac{1}{N} \sum_{n=1}^N f_1(T^{an}x) f_2(T^{bn}x) e^{2\pi i n t} \right|^2 d\mu(x) \lesssim_{a, b} \min \left\{ \vertiii{f_1}_3^2, \vertiii{f_2}_3^2 \right\}
\end{equation}
\end{theorem}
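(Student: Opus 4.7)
By symmetry it suffices to prove the bound $\lesssim_{a,b}\vertiii{f_1}_3^2$; the case of $\vertiii{f_2}_3^2$ is entirely analogous. The plan is: apply one step of van der Corput to eliminate the supremum in $t$, reduce the resulting pointwise estimate to an $L^2$-bound on a double ergodic average controlled by $\vertiii{\cdot}_2$, and close the argument via the Gowers--Host--Kra recursion relating $\vertiii{\cdot}_3$ to averages of $\vertiii{\cdot}_2$ of difference functions.

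Setting $u_n(x,t):=f_1(T^{an}x)f_2(T^{bn}x)e^{2\pi int}$ and, for each $h\in\NN$, $g_i^h(x):=f_i(x)\overline{f_i}(T^{h}x)$, the key identity
\[
u_n\,\overline{u_{n+h}} = g_1^{ah}(T^{an}x)\,g_2^{bh}(T^{bn}x)\,e^{-2\pi iht}
\]
shows that the $t$-dependence is carried by a unimodular scalar that disappears under absolute values. Van der Corput's inequality with shift parameter $H$ then yields
\[
\sup_{t\in\RR}\Bigl|\tfrac{1}{N}\sum_{n=1}^{N}u_n\Bigr|^2 \leq \frac{C}{H}\sum_{h=0}^{H-1}\Bigl|\tfrac{1}{N}\sum_{n=1}^{N-h}g_1^{ah}(T^{an}x)\,g_2^{bh}(T^{bn}x)\Bigr| + O_{f_1,f_2}\Bigl(\frac{H}{N}\Bigr),
\]
whose right-hand side is independent of $t$. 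Taking $\limsup_N$ pointwise, invoking Bourgain's pointwise double recurrence theorem \cite{BourgainDR} to replace each limsup by a bounded limit, integrating in $x$, applying $\int|\cdot|\,d\mu\leq(\int|\cdot|^2\,d\mu)^{1/2}$, and using bounded convergence yields
\[
\int\limsup_N\sup_t\Bigl|\tfrac{1}{N}\sum_n u_n\Bigr|^2 d\mu \leq \frac{C}{H}\sum_{h=0}^{H-1}\Bigl(\lim_{N}\bigl\|\tfrac{1}{N}\sum_{n}g_1^{ah}(T^{an})\,g_2^{bh}(T^{bn})\bigr\|_2^2\Bigr)^{1/2}.
\]

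The crucial quantitative input is a Host--Kra-type estimate on the double ergodic average,
\[
\lim_{N}\Bigl\|\tfrac{1}{N}\sum_{n}G_1(T^{an})\,G_2(T^{bn})\Bigr\|_2 \leq C_{a,b}\,\vertiii{G_1}_2\,\|G_2\|_\infty,
\]
valid for $a\neq b$ in $\ZZ$ and $G_1,G_2\in L^\infty(X)$, which quantifies the fact that the Kronecker factor is characteristic for the two-term average. Applied with $G_1=g_1^{ah}$, $G_2=g_2^{bh}$, and using $\|g_2^{bh}\|_\infty\leq\|f_2\|_\infty^2$, the previous display is bounded by $\frac{C'_{a,b}}{H}\sum_{h=0}^{H-1}\vertiii{g_1^{ah}}_2$. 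Jensen's inequality gives $\tfrac{1}{H}\sum_h\vertiii{g_1^{ah}}_2\leq \bigl(\tfrac{1}{H}\sum_h\vertiii{g_1^{ah}}_2^4\bigr)^{1/4}$; letting $H\to\infty$, the Gowers--Host--Kra recursion $\vertiii{f_1}_3^8 = \lim_H\frac{1}{H}\sum_h\vertiii{g_1^h}_2^4$, together with an argument showing that averaging along the sub-progression $\{ah\}$ yields the same limit up to an $a$-dependent constant, delivers the desired bound $\lesssim_{a,b}\vertiii{f_1}_3^2$.

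The main obstacle is the quantitative Host--Kra estimate on the double average: although the qualitative fact that the Kronecker factor is characteristic is classical, obtaining the clean linear-in-$\vertiii{G_1}_2$ inequality with constants uniform in $G_1$, $G_2$ requires a careful Fourier analysis on the Kronecker factor or an application of a $U^2$-inverse theorem. A secondary subtlety is passing from the formula $\lim_H H^{-1}\sum_h\vertiii{g_1^h}_2^4 = \vertiii{f_1}_3^8$ to the analogous statement along the sub-progression $\{ah\}_h$, which can be handled either by passing to the Host--Kra seminorms of $T^a$ or through an ergodic decomposition of $T^a$.
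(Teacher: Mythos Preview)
The paper does not supply a proof of this theorem: it explicitly records that the estimate ``was obtained in the proof of Theorem~5.1 of \cite{WWDR}'' and then uses it as the base case $k=1$ of the induction in Lemma~\ref{estimateLemma}. So there is no in-paper argument to compare against directly. That said, your plan is exactly the strategy one expects (and is the same mechanism the paper deploys one level up, in passing from degree $k$ to degree $k+1$): a single van der Corput step removes the phase $e^{2\pi i nt}$, reducing matters to a correlation average that can be controlled by a lower-order Gowers--Host--Kra seminorm, and then the recursion $\vertiii{f}_{k+1}^{2^{k+1}}=\lim_H H^{-1}\sum_h\vertiii{f\cdot\overline{f\circ T^h}}_k^{2^k}$ closes the loop. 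Your identification of $u_n\overline{u_{n+h}}$, the passage through Bourgain's pointwise double recurrence and dominated convergence, and the Jensen step are all correct and track the paper's later computations essentially line for line.

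The two points you flag are the only places where the sketch is not yet a proof. First, the quantitative bound $\lim_N\|\tfrac{1}{N}\sum_n G_1\circ T^{an}\cdot G_2\circ T^{bn}\|_2\lesssim_{a,b}\vertiii{G_1}_2$ is genuinely the crux; it does not fall out of the qualitative statement that $\mathcal{Z}_1$ is characteristic, and requires either a spectral/Fourier computation on the Kronecker factor or one further van der Corput in $L^2$ followed by the mean ergodic theorem. (If you pursue the Fourier route, be aware that the map $\chi\mapsto\chi^{b/a}$ on the dual of the Kronecker factor need not be injective, so the constant really does depend on $a,b$.) Second, the passage from $\lim_H H^{-1}\sum_h\vertiii{g_1^{ah}}_2^4$ to $C_a\,\vertiii{f_1}_3^8$ is correct but should be justified: one clean way is to note that $h\mapsto\vertiii{f_1\cdot\overline{f_1\circ T^h}}_2^4$ is a nonnegative definite sequence in $h$ (it is a limit of averages of $|\langle f_1,\,T^h f_1\rangle|$-type correlations), so the Ces\`aro limit along $a\ZZ$ coincides with the full Ces\`aro limit; alternatively, decompose $T^a$ into ergodic components and compare seminorms. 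Neither gap is fatal, and once filled your argument yields exactly the inequality~\eqref{estimate}.
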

In this paper, we extend this Wiener-Wintner average to a  polynomial Wiener-Wintner, i.e. If $p(n)$ is a degree-$k$ polynomial with real coefficients, then we investigate the convergence of the average
\[ \frac{1}{N} \sum_{n=0}^{N-1} f_1(T^{an}x)f_2(T^{bn}x) e^{2\pi i p(n) t}. \]
Some study has been done on polynomial Wiener-Winter averages. For instance, E. Lesigne showed that for any ergodic system $(X, \mathcal{F}, \mu, T)$, $f \in L^1(X)$, there exists a set of full-measure $X_f$ such that for every $x \in X_f$, the average
\begin{equation}\label{polyWW} \frac{1}{N} \sum_{n=1}^N \phi(p(n)) f(T^nx) \end{equation}
converges for every continuous function $\phi: \mathbb{T} \to \CC$ and for every polynomial $p(n)$ \cite{Lesigne90, Lesigne93}. Furthermore, Lesigne showed that if $T$ is totally ergodic and $f$ belongs to the orthogonal complement of the $k$-th Abramov factor, then $(\ref{polyWW})$ converges to $0$. In \cite{Fran06},  N. Frantzikinakis extended Lesigne's result by obtaining a uniform Wiener-Wintner result i.e. $f_1$ belongs to the orthogonal complement of the $k$-th Abramov factor if and only if for a.e. $x \in X$,
\begin{equation} \lim_{N \to \infty} \sup_{p \in \RR_k[t]} \left| \frac{1}{N} \sum_{n=1}^N \phi(p(n))f(T^nx) \right| = 0, \end{equation}
where $\RR_k[t]$ denotes the set of all $k$-th degree polynomials with real coefficients, and $T$ is a totally ergodic transformation. Furthermore, Frantzikinakis showed that if $T$ is \textit{not} totally ergodic, then one cannot find a set of full measure such that for any $\phi$ and $p$, the averages in $(\ref{polyWW})$ converges to $0$. This tells us that Abramov's factors are not characteristic factors for Wiener-Wintner averages with polynomials $p$ and continuous functions $\phi$.

Recently, T. Eisner and B. Krause announced uniform Wiener-Wintner results for averages with weights involving Hardy functions and for "twisted" polynomial ergodic averages \cite{EisnerKrause}.

Note that in the works of Lesigne and Frantzikinakis, the supremum were taken over the set of all the $k$-th degree polynomials with real coefficients. We are interested in taking a supremum over $t$ on the averages
\[ \frac{1}{N} \sum_{n=1}^N f_1(T^{an}x)f_2(T^{bn}x) e^{2\pi i p(n) t}. \]
Note that, by the simple application of the spectral theorem, this establishes the norm convergence of the return time averages: There exists a set of full measure $X_{f_1, f_2, p}$ such that any $x \in X_{f_1, f_2, p}$,
\[ \frac{1}{N} \sum_{n=1}^N f_1(T^{an}x)f_2(T^{bn}x)g \circ S^{p(n)} \]
converges in $L^1(\nu)$ for any measure-preserving system $(Y, \mathcal{G}, \nu, S)$ and $g \in L^1(\nu)$.
\begin{theorem}\label{mainThm}
Let $(X, \mathcal{F}, \mu, T)$ be a standard ergodic dynamical system, $a, b \in \ZZ$ such that $a \neq b$, and $f_1, f_2 \in L^2(X)$. Suppose $p(n)$ is a degree-$k$ polynomial with real coefficients, where $k \geq 2$. Let 
\[W_N(f_1, f_2,x ,p, t) = \frac{1}{N} \sum_{n=0}^{N-1} f_1(T^{an}x)f_2(T^{bn}x) e^{2\pi i p(n) t}. \]
\begin{enumerate}
\item If either $f_1$ or $f_2$ belongs to $\mathcal{Z}_{k+1}^\perp$, then there exists a set of full measure $X_{f_1 \otimes f_2, p}$ such that for all $x \in X_{f_1 \otimes f_2, p}$,
\begin{equation*}\label{UniformDWWP} \limsup_{N \to \infty} \sup_{t \in \RR} \left|W_N(f_1, f_2, x,p, t) \right| = 0 \end{equation*}
\item If $f_1, f_2 \in \mathcal{Z}_{k+1}$, then for $\mu$-a.e. $x \in X$, $W_N(f_1, f_2, x, p, t)$ converges for all $t \in \RR$.
\end{enumerate}
\end{theorem}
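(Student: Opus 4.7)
The plan is to prove part (1) by induction on the degree $k$ of $p$, establishing along the way the quantitative $L^2$ estimate
\[
\int \limsup_{N \to \infty} \sup_{t \in \RR} |W_N(f_1, f_2, x, p, t)|^2 \, d\mu(x) \lesssim_{a,b,p} \min\bigl\{\vertiii{f_1}_{k+2}^{c_k},\, \vertiii{f_2}_{k+2}^{c_k}\bigr\}
\]
for some constant $c_k > 0$. Since $f \in \mathcal{Z}_{k+1}^\perp$ is equivalent to $\vertiii{f}_{k+2} = 0$, the vanishing of the right-hand side gives $\limsup_N \sup_t |W_N| = 0$ almost everywhere, which is part (1). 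The base case $k = 1$ is exactly Theorem \ref{Thm-estimate}.

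For the inductive step from $k-1$ to $k$, I would apply van der Corput's inequality once (in the form used in \cite{WWDR}, which maintains uniformity in $t$) to arrive at
\[
|W_N(f_1, f_2, x, p, t)|^2 \leq \frac{C}{H} + \frac{C}{H} \sum_{h=1}^{H} \bigl| W_N(\tilde f_{1,h}, \tilde f_{2,h}, x, q_h, t) \bigr|,
\]
where $\tilde f_{1,h}(y) = f_1(y)\overline{f_1(T^{ah}y)}$, $\tilde f_{2,h}(y) = f_2(y)\overline{f_2(T^{bh}y)}$, and $q_h(n) = p(n+h) - p(n)$ is a polynomial of degree $k-1$. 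Taking $\sup_t$, then $\limsup_N$, integrating, and applying the inductive hypothesis (in its $L^1$ form, obtained by Cauchy-Schwarz) to each summand yields
\[
\int \limsup_N \sup_t |W_N|^2 \, d\mu \lesssim \frac{1}{H} \sum_{h=1}^H \vertiii{\tilde f_{1,h}}_{k+1}^{c_{k-1}/2}.
\]
Hölder's inequality in $h$ followed by $H \to \infty$, combined with the defining Host-Kra recursion
\[
\vertiii{f_1}_{k+2}^{2^{k+2}} = \lim_{H \to \infty} \frac{1}{H}\sum_{h=1}^H \vertiii{f_1 \cdot \overline{T^{h} f_1}}_{k+1}^{2^{k+1}},
\]
then converts the bound into a power of $\vertiii{f_1}_{k+2}$, closing the induction.

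For part (2), I would use that $\mathcal{Z}_{k+1}$ is an inverse limit of $(k+1)$-step nilsystem factors. Approximating $f_1, f_2$ in $L^2$ by continuous functions on a fixed nilfactor reduces the convergence question to the case of continuous $f_1, f_2$ on a nilmanifold, where the joint orbit $(T^{an}x, T^{bn}x, p(n)t \bmod 1) \in X \times X \times \mathbb{T}$ equidistributes along a sub-nilmanifold for almost every $x$ by Leibman's theorem on polynomial orbits of nilrotations. Pointwise convergence of $W_N(f_1, f_2, x, p, t)$ then holds for each fixed $t$ on a set of full measure, and a standard Wiener-Wintner-style approximation using a countable dense subset of $t$, together with equicontinuity, upgrades this to convergence for every $t \in \RR$ on a single set of full measure.

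The main obstacle I anticipate is the careful tracking of the uniform-in-$t$ bookkeeping at each inductive step: the error $o_H(1)$ coming from van der Corput must be controlled uniformly in $t$, and one must also verify that the Host-Kra seminorms $\vertiii{\cdot}_{k+1, T^a}$ that naturally appear in the recursion coincide with (or are comparable to) $\vertiii{\cdot}_{k+1, T}$ up to constants depending on $a$, so that the target factor $\mathcal{Z}_{k+1}^\perp$ is indeed with respect to $T$. Both issues were already present in the linear case treated in \cite{WWDR}, so the polynomial extension should be a matter of carrying the devices used there through the induction rather than inventing fundamentally new ingredients.
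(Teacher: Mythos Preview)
Your treatment of part~(1) is essentially the paper's proof: induction on the degree of $p$, van der Corput to lower the degree, the inductive $L^2$ estimate, and H\"older plus the Host--Kra recursion to pass from $\vertiii{\,\cdot\,}_{k+1}$ to $\vertiii{\,\cdot\,}_{k+2}$ as $H\to\infty$. Nothing to add there.

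For part~(2) your route diverges from the paper's. The paper splits into $t$ irrational and $t$ rational. For irrational $t$ it builds a continuous character $\overline{\phi}_t$ on the Kronecker factor with $\overline{\phi}_t(\beta)=e^{2\pi i t}$, sets $f_t=\overline{\phi}_t\circ\pi$ on $Z_{k+1}$, observes $f_t(T^{p(n)}x)=f_t(x)e^{2\pi i p(n)t}$, and invokes Leibman on $Z_{k+1}$; this character extension genuinely fails for rational $t$, which is why the paper then runs a separate inductive argument for $t\in\QQ$ using Anzai skew products $R_\alpha(y,z)=(y+\alpha,\,z+y^m)$ on $\mathbb{T}^2$ to manufacture the weight $e^{2\pi i n^{m+1}\alpha^m}$ out of a double-recurrence average on $X\times\mathbb{T}^2$, reducing to the degree-$m$ Wiener--Wintner result already in hand. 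Your proposal instead applies Leibman directly to the polynomial sequence $(T^{an}x,T^{bn}x,\,p(n)t)$ in the product nilmanifold $X\times X\times\mathbb{T}$. That is legitimate and more economical: it handles all $t$ at once and bypasses both the character construction and the skew-product machinery. What the paper's approach buys is that the rational-$t$ case never leaves the realm of Bourgain's double recurrence and the linear Wiener--Wintner theorem, whereas you lean entirely on Leibman.

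One point in your write-up should be corrected. You say Leibman gives convergence ``for almost every $x$'' for each fixed $t$, and then propose to upgrade to all $t$ via a countable dense set and ``equicontinuity.'' Leibman's theorem in fact gives convergence for \emph{every} point of the nilmanifold when the functions are continuous, so for continuous $f_1,f_2$ on a nilfactor the convergence already holds for all $x$ and all $t$ simultaneously, and no density/equicontinuity step is needed (nor is equicontinuity in $t$ of the limit available in general). The only place care is required is in passing from continuous to $L^\infty$ functions; there the paper's maximal-inequality approximation (bounding $\sup_{N\ge 1,\,t\in\RR}|W_N|$ in $L^1$ via the maximal ergodic theorem and then using Fatou) is exactly the device you want, and it preserves uniformity in $t$ on a single null set.
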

To prove this result, we first consider the case either $f_1$ or $f_2$ is in $\mathcal{Z}_{k+1}^\perp$, and prove the uniformity result by applying the induction on the degree of the polynomial $p$. Our base case is covered by our previous result Theorem $\ref{WWDR}$. Then we assume that both $f_1$ and $f_2$ belong to $\mathcal{Z}_{k+1}$. For this case, we break further into two cases: One where $t \in \QQ$, and $t \notin \QQ$. For the first case, we find appropriate skew product transformation on $\mathbb{T}^2$ for each degree of $p$, and use induction to prove the result. For the second case, we use the structure of nilsystems and Liebman's polynomial convergence result \cite{Leibman} to prove the claim.

One of the main inequalities used to prove (1) of Theorem $\ref{mainThm}$ is the van der Corput's lemma, which is stated as follows (a proof can found in \cite{KuipersNiederreiter}):
\begin{lemma}[van der Corput] \label{lem-vdc}
If $(a_n)$ is a sequence of complex numbers and if $H$ is an integer between $0$ and $N-1$, then
\begin{align}\label{vdc}
\left| \frac{1}{N} \sum_{n=0}^{N-1} a_n \right|^2 
&\leq \frac{N+H}{N^2(H+1)} \sum_{n=0}^{N-1} |a_n|^2 \\
&+ \frac{2(N+H)}{N^2(H+1)^2} \sum_{h=1}^H (H + 1 - h) Re \left( \sum_{n=0}^{N-h-1} a_n \overline{a}_{n+h} \right). \nonumber
\end{align}
\end{lemma}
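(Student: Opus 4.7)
The plan is to prove the inequality by the classical doubling (duplication) argument. First I would extend the sequence by setting $b_n = a_n$ for $0 \leq n \leq N-1$ and $b_n = 0$ otherwise. The key combinatorial identity is
\[ (H+1) \sum_{n=0}^{N-1} a_n \;=\; \sum_{m=-H}^{N-1} \sum_{h=0}^{H} b_{m+h}, \]
which holds because each index $n \in \{0, 1, \ldots, N-1\}$ is of the form $m+h$ with $m \in \{-H, \ldots, N-1\}$ and $h \in \{0, \ldots, H\}$ in exactly $H+1$ distinct ways, and all other terms vanish since $b$ is supported on $[0, N-1]$.

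Next I would apply the Cauchy-Schwarz inequality to the outer sum over $m$, which runs over $N + H$ values, to obtain
\[ \left| (H+1) \sum_{n=0}^{N-1} a_n \right|^2 \;\leq\; (N+H) \sum_{m=-H}^{N-1} \left| \sum_{h=0}^{H} b_{m+h} \right|^2. \]
Expanding the inner square and interchanging the order of summation gives
\[ \sum_{m} \left| \sum_{h=0}^{H} b_{m+h} \right|^2 \;=\; \sum_{h, h' = 0}^{H} \sum_{m} b_{m+h} \overline{b_{m+h'}}. \]
The diagonal $h = h'$ contributes $(H+1) \sum_{n=0}^{N-1} |a_n|^2$. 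For the off-diagonal terms I would group by the difference $d = h' - h$: for each $d > 0$ there are exactly $H+1-d$ pairs $(h, h')$ with that difference, and the contributions from $d < 0$ combine with those from $d > 0$ by complex conjugation to yield
\[ 2 \sum_{d=1}^{H} (H+1-d) \, Re \left( \sum_{n=0}^{N-1-d} a_n \overline{a_{n+d}} \right). \]

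Combining these pieces and dividing both sides by $N^2 (H+1)^2$ produces exactly the stated inequality. The main technical obstacle is purely bookkeeping: one must carefully verify the counting in the initial identity (especially the range of $m$), correctly enumerate the pairs $(h, h')$ of fixed difference, and use the conjugate symmetry $\sum_m b_m \overline{b_{m-d}} = \overline{\sum_m b_m \overline{b_{m+d}}}$ to fold the negative differences onto the positive ones. Once the indexing is set up correctly, no further analytic input is needed and the lemma drops out.
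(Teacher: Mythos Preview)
Your argument is correct and is precisely the classical proof of van der Corput's inequality; the paper does not give its own proof but refers to \cite{KuipersNiederreiter}, where exactly this doubling/Cauchy--Schwarz argument appears. The counting you describe (each $n$ hit $H+1$ times, $H+1-d$ pairs of fixed difference $d$, conjugate folding) is accurate and yields the stated bound upon dividing by $N^2(H+1)^2$.
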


Throughout this paper, we assume $f_1$ and $f_2$ to be real-valued functions, and $\| f_1 \|_\infty, \| f_2 \|_\infty \leq 1$.
\section{Case when either $f_1$ or $f_2$ is in $\mathcal{Z}_{k+1}^\perp$}
Here we prove (1) of Theorem $\ref{mainThm}$. We do so by extending Theorem $\ref{Thm-estimate}$ to polynomials with higher degree and higher order Host-Kra-Ziegler factors \cite{HostKraNEA, Ziegler}.
\begin{theorem}\label{uniformPoly}
Let $(X, \mathcal{F}, \mu, T)$ be an ergodic system, and $f_1, f_2 \in L^\infty(\mu)$. Suppose $p$ is a $k$-th degree polynomial with real coefficients, where $k \geq 1$. If either $f_1$ or $f_2$ belongs to $\mathcal{Z}_{k+1}^\perp$, then there exists a set of full measure $X_{f_1 \otimes f_2, p} \subset X$ such that for all $x \in X_{f_1 \otimes f_2, p}$,
\begin{equation}\label{polynomial} \lim_{N \to \infty} \sup_{t \in \RR} \left| \frac{1}{N} \sum_{n=1}^N f_1(T^{an}x) f_2(T^{bn}x) e^{2\pi i p(n) t} \right| = 0. \end{equation}
\end{theorem}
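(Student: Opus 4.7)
The plan is to strengthen the statement to an $L^2$ maximal estimate
\[
\int \limsup_{N \to \infty} \sup_{t \in \RR} |W_N(f_1, f_2, x, p, t)|^2 \, d\mu(x) \lesssim_{a,b,p} \min\{\vertiii{f_1}_{k+2}^2, \vertiii{f_2}_{k+2}^2\}
\]
and prove it by induction on the degree $k$ of $p$. Since $f_1 \in \mathcal{Z}_{k+1}^\perp$ is equivalent to $\vertiii{f_1}_{k+2} = 0$, this estimate together with Chebyshev's inequality (and a standard reduction of $\sup_{t \in \RR}$ to a countable dense subset, using continuity of $W_N(\cdot)$ in $t$) yields the desired a.e.\ vanishing. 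The base case $k = 1$ is Theorem \ref{Thm-estimate}: if $p(n) = cn + d$ with $c \neq 0$, the substitution $s = ct$ reduces $\sup_t |W_N(f_1, f_2, x, p, t)|$ to the linear Wiener-Wintner supremum controlled there.

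For the inductive step, I would apply Lemma \ref{lem-vdc} pointwise in $(x, t)$ to $a_n = f_1(T^{an}x) f_2(T^{bn}x) e^{2\pi i p(n) t}$, which satisfies $|a_n| \leq 1$. The cross-correlation factors as
\[
a_n \overline{a_{n+h}} = F_{1,h}(T^{an} x) F_{2,h}(T^{bn} x) e^{2\pi i q_h(n) t},
\]
where $F_{1,h}(y) := f_1(y) f_1(T^{ah} y)$, $F_{2,h}(y) := f_2(y) f_2(T^{bh} y)$, and $q_h(n) := p(n) - p(n+h)$ is a real polynomial of degree $k - 1$ in $n$. Taking $\sup_t$ on both sides of van der Corput, pushing it inside the sum over $h$, integrating over $x$, and twice applying Cauchy-Schwarz yields
\[
\int \limsup_N \sup_t |W_N|^2 d\mu \lesssim \frac{1}{H} + \left( \frac{1}{H} \sum_{h=1}^H \int \limsup_N \sup_t \bigl|W_N(F_{1,h}, F_{2,h}, x, q_h, t)\bigr|^2 d\mu \right)^{1/2}.
\]
The inductive hypothesis bounds each inner integral by $C \vertiii{F_{1,h}}_{k+1}^2$. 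Since $\|F_{1,h}\|_\infty \leq 1$, monotonicity of $\ell^p$ norms on $\{1, \ldots, H\}$ gives $\frac{1}{H}\sum_h \vertiii{F_{1,h}}_{k+1}^2 \leq \bigl(\frac{1}{H}\sum_h \vertiii{F_{1,h}}_{k+1}^{2^{k+1}}\bigr)^{1/2^k}$; letting $H \to \infty$ and invoking the Host-Kra recursion
\[
\vertiii{f_1}_{k+2}^{2^{k+2}} = \lim_{H \to \infty} \frac{1}{H} \sum_{h=1}^H \vertiii{f_1 \cdot f_1 \circ T^h}_{k+1}^{2^{k+1}}
\]
produces a bound of $\vertiii{f_1}_{k+2}^{4}$ under the square root, i.e.\ $\vertiii{f_1}_{k+2}^{2}$, closing the induction with the same exponent.

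The two principal technical obstacles are: first, ensuring that $\sup_t$ commutes cleanly with the van der Corput decomposition, which is manageable because $t$ enters the cross-correlation only through the linear factor $q_h(n) t$, so the inner average remains of exactly the form amenable to the inductive hypothesis with a polynomial of degree $k-1$; and second, relating $\vertiii{F_{1,h}}_{k+1} = \vertiii{f_1 \cdot f_1 \circ T^{ah}}_{k+1}$ (involving the iterate $T^a$) back to $\vertiii{f_1}_{k+2}$ with respect to $T$. The latter is handled by the fact that the Host-Kra recursion holds with $T^a$ in place of $T$ in the ergodic setting, up to constants depending on $a$.
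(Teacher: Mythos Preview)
Your proposal is correct and follows essentially the same approach as the paper: the paper also proves the $L^2$ maximal estimate (Lemma~\ref{estimateLemma}) by induction on the degree via van der Corput, Cauchy--Schwarz, and the Host--Kra recursion, with Theorem~\ref{Thm-estimate} as the base case. The only minor differences are that the paper tracks the constant as $\lesssim_{a,b}$ (independent of the specific polynomial $p$, which is indeed attainable since $\sup_t$ absorbs scalings of the coefficients), and that you explicitly flag the $T^{ah}$-versus-$T^h$ issue in the recursion, which the paper passes over silently.
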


We prove this by applying induction on $k$. Before we prove the result, we would like to prove the case for $k = 2$ directly to provide the main idea of the proof. Let $p(n) = \alpha n^2 + \beta n + \gamma$, where $\alpha, \beta, \gamma$ are real constants such that $\alpha \neq 0$. The main idea of the proof for this case is to apply the van der Corput's inequality to reduce the degree of the polynomial (to a degree-one polynomial), and use the inequality $(\ref{estimate})$ after integration. This allows us to find an upper bound for the integral of the left-hand side of $(\ref{polynomial})$ in terms of the fourth Gowers-Host-Kra seminorm of $f_1$, i.e. $\vertiii{f_1}_4$.

We apply the inequality ($\ref{vdc}$) by setting $a_n = f_1(T^{an}x) f_2(T^{bn}x) e^{2\pi i p(n) t}$, taking limit supremum and applying the Cauchy-Schwarz inequality would give us
\begin{align*}
& \limsup_{N \to \infty} \sup_{t \in \RR} \left| \frac{1}{N} \sum_{n=1}^N f_1(T^{an}x)f_2(T^{bn}x) e^{2\pi i p(n) t} \right|^2
\leq \frac{2}{(H+1)} \\
&+ \frac{4}{(H+1)^2} \sum_{h=1}^H (H + 1 - h)\limsup_{N \to \infty} \sup_{t \in \RR} \Re \left( e^{-2\pi i (\alpha h^2 + \beta h)t} \frac{1}{N} \sum_{n=1}^{N- h - 1} f_1\cdot f_1 \circ T^{ah}(T^{an}x)f_2\cdot f_2 \circ T^{bh}(T^{bn}x) e^{2\pi i (-2\alpha h n)t} \right) \\
&\leq \frac{2}{H+1} + \frac{4}{H+1} \sum_{h=1}^H  \limsup_{N \to \infty}\sup_{t \in \RR} \left| \frac{1}{N} \sum_{n=1}^{N- h - 1} f_1\cdot f_1 \circ T^{ah}(T^{an}x)f_2\cdot f_2 \circ T^{bh}(T^{bn}x) e^{2\pi i (-2\alpha h n)t} \right| \\
&\leq \frac{2}{H+1} + \left(\frac{4}{H+1} \sum_{h=1}^H  \limsup_{N \to \infty} \sup_{t \in \RR} \left| \frac{1}{N} \sum_{n=1}^{N- h - 1} f_1\cdot f_1 \circ T^{ah}(T^{an}x)f_2\cdot f_2 \circ T^{bh}(T^{bn}x) e^{2\pi i (-2\alpha h n)t} \right|^2 \right)^{1/2} 
\end{align*}
If we take the integral on both sides of the inequality on $N$, and apply H\"{o}lder's inequality, then we obtain
\begin{align}\label{totalIneq_n-sq}
& \int \limsup_{N \to \infty}\sup_{t \in \RR} \left| \frac{1}{N} \sum_{n=1}^N f_1(T^{an}x)f_2(T^{bn}x) e^{2\pi i p(n) t} \right|^2 d\mu(x)  \leq \frac{2}{H+1} \nonumber\\
&+ \left(\frac{4}{H+1} \sum_{h=1}^H  \int \limsup_{N \to \infty} \sup_{t \in \RR} \left| \frac{1}{N} \sum_{n=1}^{N- h - 1} f_1\cdot f_1 \circ T^{ah}(T^{an}x)f_2\cdot f_2 \circ T^{bh}(T^{bn}x) e^{2\pi i (-2\alpha h n)t} \right|^2 d\mu \right)^{1/2} 
\end{align}
By ($\ref{estimate}$), we know that for all $1 \leq h \leq H$, the following estimate holds for the integral on the right hand side:
\begin{align} \label{estimate_n-sq_prim} &\int \limsup_{N \to \infty} \sup_{t \in \RR} \left| \frac{1}{N} \sum_{n=1}^{N- h - 1} f_1\cdot f_1 \circ T^{ah}(T^{an}x)f_2\cdot f_2 \circ T^{bh}(T^{bn}x) e^{2\pi i (-2\alpha h n)t} \right|^2 d\mu \nonumber\\
&\lesssim_{a, b} \min\left( \vertiii{f_1 \cdot f_1 \circ T^{ah}}_3^2,\vertiii{f_2 \cdot f_2 \circ T^{bh}}_3^2 \right). \end{align}
Therefore, together with $(\ref{totalIneq_n-sq})$, $(\ref{estimate_n-sq_prim})$, and the Cauchy-Schwarz inequality, we obtain
\begin{align*}
&\int \limsup_{N \to \infty}\sup_{t \in \RR} \left| \frac{1}{N} \sum_{n=1}^N f_1(T^{an}x)f_2(T^{bn}x) e^{2\pi i p(n) t} \right|^2 d\mu(x) \\
&\lesssim_{a, b} \frac{1}{H} +  \min\left\{ \left( \frac{1}{H} \sum_{h=1}^H\vertiii{f_1 \cdot f_1 \circ T^{ah}}_3^2 \right)^{1/2}, \left( \frac{1}{H} \sum_{h=1}^H\vertiii{f_2 \cdot f_2 \circ T^{bh}}_3^2 \right)^{1/2}\right\} \\
&\leq \frac{1}{H} +   \min \left\{\left( \frac{1}{H} \sum_{h=1}^H \vertiii{f_1 \cdot f_1 \circ T^{ah}}_3^4\right)^{1/4} , \left(\frac{1}{H} \sum_{h=1}^H\vertiii{f_2 \cdot f_2 \circ T^{bh}}_3^4\right)^{1/4}\right\} ,
\end{align*}
and if we let $H \to \infty$, we obtain
\begin{equation}\label{estimateZ3} \int \limsup_{N \to \infty}\sup_{t \in \RR} \left| \frac{1}{N} \sum_{n=1}^N f_1(T^{an}x)f_2(T^{bn}x) e^{2\pi i p(n) t} \right|^2 d\mu(x) \lesssim_{a, b} \min \left\{ \vertiii{f_1}_4^2, \vertiii{f_2}_4^2 \right\}. \end{equation}
So if either $f_1$ $f_2$ is in $\mathcal{Z}_3^\perp$, we know that either $\vertiii{f_1}_4^2 = 0$ or $\vertiii{f_2}_4^2 = 0$, so for $\mu$-a.e. $x \in X$
\[\limsup_{N \to \infty} \sup_{t \in \RR} \left|  \frac{1}{N} \sum_{n=1}^N f_1(T^{an}x)f_2(T^{bn}x)e^{2\pi i p(n) t} \right|= 0. \]

One of the key observations in this example is the estimate we obtained in ($\ref{estimateZ3}$). This inequality in fact holds for any $k$-th degree polynomials.

\begin{lemma}\label{estimateLemma}
Let $(X, \mathcal{F}, \mu, T)$ be an ergodic system, and $f_1, f_2 \in L^\infty(X)$. Suppose $p(n)$ is a degree-$k$ polynomial. Then for any $a, b \in \ZZ$, we have
\begin{equation} \label{estimateDeg-kPoly}
 \int \limsup_{N \to \infty}\sup_{t \in \RR} \left| \frac{1}{N} \sum_{n=1}^N f_1(T^{an}x)f_2(T^{bn}x) e^{2\pi i p(n) t} \right|^2 d\mu(x) \lesssim_{a, b} \min \left\{ \vertiii{f_1}_{k+2}^{2}, \vertiii{f_2}_{k+2}^{2}\right\}
\end{equation}
\end{lemma}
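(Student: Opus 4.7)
The plan is induction on the degree $k$ of $p$. The base case $k=1$ is exactly Theorem \ref{Thm-estimate}. For the inductive step, assuming \eqref{estimateDeg-kPoly} for every polynomial of degree $k-1$, I would follow the paper's $k=2$ derivation verbatim at the abstract level.

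Set $a_n = f_1(T^{an}x) f_2(T^{bn}x) e^{2\pi i p(n) t}$ and apply van der Corput (Lemma \ref{lem-vdc}). Since $\|f_i\|_\infty \leq 1$ the diagonal contribution is $O(1/(H{+}1))$, while each off-diagonal summand has the form
\[ a_n \overline{a_{n+h}} = \bigl( f_1 \cdot (f_1 \circ T^{ah}) \bigr)(T^{an}x) \bigl( f_2 \cdot (f_2 \circ T^{bh}) \bigr)(T^{bn}x) \, e^{2\pi i (p(n)-p(n+h))t}, \]
so the twist $q_h(n) := p(n) - p(n+h)$ is a polynomial of degree exactly $k-1$ in $n$ for every $h \neq 0$, with its constant-in-$n$ term producing a unimodular factor that is absorbed by $\sup_t$. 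Applying Cauchy--Schwarz in $h$, taking $\limsup_N$ and $\sup_{t \in \RR}$, integrating against $\mu$, and invoking H\"older's inequality---exactly as in the derivation of \eqref{totalIneq_n-sq}---yields
\[ \int \limsup_N \sup_t |W_N(f_1,f_2,x,p,t)|^2 \, d\mu \lesssim \frac{1}{H+1} + \Bigl( \frac{4}{H+1} \sum_{h=1}^H I_h \Bigr)^{1/2}, \]
where $I_h$ is the same integral applied to $F_{1,h} := f_1 \cdot (f_1 \circ T^{ah})$, $F_{2,h} := f_2 \cdot (f_2 \circ T^{bh})$, and the degree-$(k-1)$ weight $q_h$.

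By the inductive hypothesis applied to $q_h$, $I_h \lesssim_{a,b} \min\{\vertiii{F_{1,h}}_{k+1}^2, \vertiii{F_{2,h}}_{k+1}^2\}$. To pass from the $(k{+}1)$-st to the $(k{+}2)$-nd seminorm, apply Jensen's inequality (power-mean) to upgrade the exponent:
\[ \Bigl( \frac{1}{H} \sum_{h=1}^H \vertiii{F_{i,h}}_{k+1}^2 \Bigr)^{1/2} \leq \Bigl( \frac{1}{H} \sum_{h=1}^H \vertiii{F_{i,h}}_{k+1}^{2^k} \Bigr)^{1/2^k}. \]
Letting $H \to \infty$ and invoking the Host--Kra recursive definition of the seminorms converts the right-hand side to $\vertiii{f_i}_{k+2}^2$. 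Taking the minimum over $i \in \{1,2\}$ completes the induction.

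The main obstacle is correctly calibrating the Jensen step so that, after $H \to \infty$, the resulting exponent on $\vertiii{f_i}_{k+2}$ is exactly $2$; this simply requires choosing the right Jensen power $m = 2^{k-1}$. A secondary subtlety is that the shifts appearing in $F_{i,h}$ are $T^{ah}$ (resp.\ $T^{bh}$) rather than $T^h$, which is handled exactly as in the paper's $k=2$ derivation of \eqref{estimateZ3}, with the dependence on $a,b$ absorbed into the implicit $\lesssim_{a,b}$ constant.
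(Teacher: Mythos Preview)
Your approach is essentially identical to the paper's: induction on the degree, van der Corput to drop the degree by one, Cauchy--Schwarz in $h$, integrate and apply the inductive hypothesis, then a power-mean (Jensen) step followed by the Host--Kra recursion to pass from $\vertiii{\cdot}_{k+1}$ to $\vertiii{\cdot}_{k+2}$. The only slip is the Jensen exponent: since the recursion reads $\vertiii{f}_{k+2}^{2^{k+2}} = \lim_{H}\frac{1}{H}\sum_h \vertiii{f\cdot \overline{f}\circ T^h}_{k+1}^{2^{k+1}}$, you need exponent $2^{k+1}$ on $\vertiii{F_{i,h}}_{k+1}$ (i.e.\ Jensen power $m=2^{k}$, not $2^{k-1}$), after which the limit yields exactly $\vertiii{f_i}_{k+2}^{2}$ as in the paper.
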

The idea of the proof is as follows: We use an induction on $k$, the degree of the polynomial $p(n)$. The base case given in Theorem $\ref{Thm-estimate}$. For the inductive step, we assume the statement is true for $k = 1, 2, \ldots, l$, and will show that the statement holds for $k = l+1$ as well. To do so, we apply the van der Corput's inequality to reduce the degree of polynomial (just like we did for the case of degree-2 polynomials), and apply the estimate using the inductive hypothesis. This allows us to obtain the appropriate Gowers-Host-Kra seminorm for our upper bound.
\begin{proof}[Proof of Lemma $\ref{estimateLemma}$]
We proceed by induction on $k$. The base case $k = 1$ is clear from $(\ref{estimate})$. Now suppose the claim holds for $k = 1, 2, \ldots, l$. Let $p(n)$ be a polynomial with degree $l+1$. If $q_h(n) = p(n+h)-p(n)$, then $q_h(n)$ is a polynomial of degree less than or equal to $l$ for all $h$, viewing $n$ as the variable. By the inequality ($\ref{vdc}$), and the Cauchy-Schwarz inequality, we know that
\begin{align*}
& \limsup_{N \to \infty}\sup_{t \in \RR} \left| \frac{1}{N} \sum_{n=1}^N f_1(T^{an}x)f_2(T^{bn}x) e^{2\pi i p(n) t} \right|^2 \\
&\leq \frac{2}{H+1} + \frac{4}{H+1} \sum_{h=1}^H  \limsup_{N \to \infty}\sup_{t \in \RR} \left| \frac{1}{N} \sum_{n=1}^{N- h - 1} f_1\cdot f_1 \circ T^{ah}(T^{an}x)f_2\cdot f_2 \circ T^{bh}(T^{bn}x) e^{2\pi i q_h(n)t} \right| \\
&\leq \frac{2}{H+1} + \left(\frac{4}{H+1} \sum_{h=1}^H \limsup_{N \to \infty} \sup_{t \in \RR} \left| \frac{1}{N} \sum_{n=1}^{N- h - 1} f_1\cdot f_1 \circ T^{ah}(T^{an}x)f_2\cdot f_2 \circ T^{bh}(T^{bn}x) e^{2\pi i q_h(n)t} \right|^2 \right)^{1/2} 
\end{align*}
By taking limit supremum, integrating both sides, and applying the Cauchy-Schwarz inequality, we have
\begin{align*}
& \int \limsup_{N \to \infty}\sup_{t \in \RR} \left| \frac{1}{N} \sum_{n=1}^N f_1(T^{an}x)f_2(T^{bn}x) e^{2\pi i p(n) t} \right|^2 d\mu(x) \\
&\leq \frac{2}{H+1} + \left(\frac{4}{H+1} \sum_{h=1}^H  \int \limsup_{N \to \infty} \sup_{t \in \RR} \left| \frac{1}{N} \sum_{n=1}^{N- h - 1} f_1\cdot f_1 \circ T^{ah}(T^{an}x)f_2\cdot f_2 \circ T^{bh}(T^{bn}x) e^{2\pi i q_h(n)t} \right|^2 d\mu \right)^{1/2}.
\end{align*}
For any $1 \leq h \leq H$, the inductive hypothesis tells us that
\begin{align*}
&\int \limsup_{N \to \infty} \sup_{t \in \RR} \left| \frac{1}{N} \sum_{n=1}^{N- h - 1} f_1\cdot f_1 \circ T^{ah}(T^{an}x)f_2\cdot f_2 \circ T^{bh}(T^{bn}x) e^{2\pi i q_h(n)t} \right|^2 d\mu \\
&\lesssim_{a, b} \min \left\{ \vertiii{f_1 \cdot f_1 \circ T^{ah}}_{l+2}^{2}, \vertiii{f_2 \cdot f_2\circ T^{bh}}_{l+2}^{2} \right\}. 
\end{align*}
Therefore,
\begin{align*}
&\int \limsup_{N \to \infty}\sup_{t \in \RR} \left| \frac{1}{N} \sum_{n=1}^N f_1(T^{an}x)f_2(T^{bn}x) e^{2\pi i p(n) t} \right|^2 d\mu(x) \\
&\lesssim_{a, b} \frac{1}{H} + \min \left\{ \left( \frac{1}{H} \sum_{h=1}^H \vertiii{f_1 \cdot f_1 \circ T^{ah}}_{l+2}^{2} \right)^{1/2}, \left( \frac{1}{H} \sum_{h=1}^H \vertiii{f_2 \cdot f_2 \circ T^{bh}}_{l+2}^{2} \right)^{1/2} \right\} \\
&\lesssim_{a, b} \frac{1}{H} + \min \left\{ \left( \frac{1}{H} \sum_{h=1}^H \vertiii{f_1 \cdot f_1 \circ T^{ah}}_{l+2}^{2^{l+2}} \right)^{2^{-(l+2)}}, \left( \frac{1}{H} \sum_{h=1}^H \vertiii{f_2 \cdot f_2 \circ T^{bh}}_{l+2}^{2^{l+2}} \right)^{2^{-(l+2)}} \right\}
\end{align*}
and if we let $H \to \infty$, we obtain
\begin{equation*} \int \limsup_{N \to \infty}\sup_{t \in \RR} \left| \frac{1}{N} \sum_{n=1}^N f_1(T^{an}x)f_2(T^{bn}x) e^{2\pi i p(n) t} \right|^2 d\mu(x) \lesssim_{a, b} \min \left( \vertiii{f_1}_{l+3}^{2}, \vertiii{f_2}_{l+3}^{2}\right).\end{equation*}
\end{proof}

\begin{proof}[Proof of Theorem $\ref{uniformPoly}$]
Since either $f_1 \in \mathcal{Z}_{k+1}^\perp$ or $f_2 \in \mathcal{Z}_{k+1}^\perp$, the right hand side of the inequality ($\ref{estimateDeg-kPoly}$) is $0$, so we must have
\[  \limsup_{N \to \infty}\sup_{t \in \RR} \left| \frac{1}{N} \sum_{n=1}^N f_1(T^{an}x)f_2(T^{bn}x) e^{2\pi i p(n) t} \right| = 0. \]
\end{proof}
\section{Case when both $f_1, f_2 \in \mathcal{Z}_{k+1}$}
\label{sec:CVinZ2}
Here we prove the convergence of double recurrence $k$-th degree polynomial Wiener Wintner averages for the case where $f_1, f_2 \in \mathcal{Z}_{k+1}$. To do so, we will use the structural properties of nilsystems discussed in \cite{HostKraCubes}. The summary of this result is provided in \cite{WWDR}.
\begin{theorem}\label{BothInZk+1}
Let $(X, \mathcal{F}, \mu, T)$ be an ergodic dynamical system, and $p$ be a $k$-th degree polynomial of real coefficients. Suppose $f_1, f_2 \in \mathcal{Z}_{k+1}$, and $a, b \in \ZZ$ such that $a \neq b$. Then the average
\begin{equation}\label{generalCV} \frac{1}{N} \sum_{n=1}^N f_1(T^{an}x)f_2(T^{bn}x)e^{2\pi i p(n) t} \end{equation}
converges, as $N \to \infty$, off of a single null-set that is independent of $t$.
\end{theorem}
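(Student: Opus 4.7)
The plan is to reduce the problem to a nilsystem, encode the polynomial phase via an auxiliary skew product, and invoke Leibman's polynomial convergence theorem on nilmanifolds.

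First, by the Host--Kra structure theorem, the factor $\mathcal{Z}_{k+1}$ is an inverse limit of $(k+1)$-step nilsystems, so I approximate $f_1, f_2 \in \mathcal{Z}_{k+1}$ in $L^2$ by continuous functions on a finite-dimensional $(k+1)$-step nilfactor. The essential observation is that Lemma~\ref{estimateLemma} bounds the $L^2$-norm of $\sup_t |W_N|$ by a Gowers--Host--Kra seminorm, so the approximation error is uniform in $t$; this allows the reduction to be performed off a single null set that does not depend on $t$. After this reduction, $X = G/\Gamma$ is a $(k+1)$-step nilmanifold, $T$ is left translation by some $\tau \in G$, and $f_1, f_2$ are continuous on $X$.

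Following the outline in the introduction, I split into $t \in \QQ$ and $t \notin \QQ$. For $t \in \QQ$, I induct on $k$, with the base case $k=1$ provided by Theorem~\ref{WWDR}(2). For the inductive step, I realize the polynomial phase $e^{2\pi i p(n) t}$ as a coordinate of the orbit under a skew product $S$ on $\mathbb{T}^2$ of the form $S(u, v) = (u + 2\alpha_k t,\, v + u)$, chosen so that the second coordinate of $S^n$ reproduces $p(n) t \bmod 1$ up to terms of lower degree in $n$. Working in the enlarged system $X \times \mathbb{T}^2$ with transformation $(x, u, v) \mapsto (Tx, S(u,v))$, the weighted average is recast as a multiple average whose explicit polynomial weight has degree at most $k-1$, to which the inductive hypothesis applies. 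A countable union of the resulting null sets over $t \in \QQ$ produces a single null set valid for all rational $t$.

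For $t \notin \QQ$, I realize $n \mapsto (T^{an} x, T^{bn} x, e^{2\pi i p(n) t})$ as the orbit of a polynomial sequence $g(n)$ in a single nilpotent group acting on the enlarged nilmanifold $(G/\Gamma)^2 \times \mathbb{T}^k$, where the $\mathbb{T}^k$ factor houses the standard skew-product realization of the degree-$k$ polynomial phase. Since $t$ is irrational and the leading coefficient of $p$ is nonzero, the associated polynomial sequence is generic in the sense of Leibman, and Leibman's theorem~\cite{Leibman} gives convergence of $\frac{1}{N} \sum_{n=1}^N F(g(n) \cdot y_0)$ for every continuous $F$ and every base point $y_0$. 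Applied to $F(y_1, y_2, z) = f_1(y_1) f_2(y_2) \cdot z$ and $y_0 = (x, x, 1)$, this yields convergence at every $x \in G/\Gamma$, so no additional null set is needed for irrational $t$; combined with the null sets from the reduction and the rational case, this produces a single null set valid for all $t \in \RR$.

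The principal obstacle is the nilpotent-group setup in the irrational case: one must carefully construct a single nilpotent group in which the product evolution under $T^a$ and $T^b$ combines with the skew-product realization of the polynomial phase so that the resulting $n$-th iterate is a polynomial sequence in Leibman's sense. A secondary subtlety is the inductive reduction for rational $t$: the new function on the enlarged system $X \times \mathbb{T}^2$ must be shown to inherit the regularity and spectral properties (namely, residing in the appropriate Host--Kra factor of the extended system) needed for the inductive hypothesis to apply cleanly.
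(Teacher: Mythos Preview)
Your proposal is correct and follows the paper's overall architecture: split into rational and irrational $t$, use Leibman's nilmanifold theorem for irrational $t$, and induct via a skew product on $\mathbb{T}^2$ for rational $t$ (with Lemma~\ref{rational} handling the countable intersection). Two implementation details differ from the paper. First, for the $L^2$-approximation step you invoke Lemma~\ref{estimateLemma} to control $\int\limsup_N\sup_t|W_N|^2$ by a seminorm of the error; the paper instead uses Cauchy--Schwarz on the sum together with the maximal ergodic theorem to bound $\int\sup_{N,t}|W_N|$ directly by $\|f_1-f_1^i\|_2$. Both work (your route needs the observation that $\vertiii{g}_{k+2}\to 0$ when $\|g\|_\infty$ stays bounded and $\|g\|_2\to 0$), but the paper's version is slightly more elementary. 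Second, for irrational $t$ you build an enlarged nilmanifold $(G/\Gamma)^2\times\mathbb{T}^k$ and a polynomial sequence in the product group; the paper instead stays on the single nilmanifold $Z_{k+1}$ by extending the character $\beta\mapsto e^{2\pi i t}$ from the dense cyclic subgroup of the Kronecker factor $Z_1$ to a continuous function $f_t$ satisfying $f_t(T^{p(n)}x)=f_t(x)e^{2\pi i p(n)t}$, and then applies Leibman directly to $\frac{1}{N}\sum f_1(T^{an}x)f_2(T^{bn}x)f_t(T^{p(n)}x)$. The paper's device is cleaner and avoids your ``principal obstacle'' entirely; also note that Leibman's theorem gives convergence for \emph{every} base point without any genericity hypothesis, so your remark about irrationality forcing genericity is unnecessary.

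Your ``secondary subtlety'' is a non-issue: the inductive hypothesis in the rational case is the full Wiener--Wintner convergence for all bounded functions at degree $\leq k-1$ (combining parts (1) and (2) of Theorem~\ref{mainThm}), so you never need to verify that $F_1,F_2$ on $X\times\mathbb{T}^2$ lie in any particular Host--Kra factor of the extended system.
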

We will break the proof of this theorem into two cases: First for the case when $t$ is irrational, and second for the case $t$ is rational. The proof for the case when $t$ is irrational is very similar to the proof of the case when $t$ is irrational from Theorem 7.3 in \cite{WWDR} (which is the case $p(n) = n$), which we shall discuss briefly. 

We first prove this for the case $f_1$ and $f_2$ are continuous. Let $Z_1$ be a Kronecker system of $X$. Recall that there exists $\beta \in Z_1$ so that $T$ acts as a rotation by $\beta$, and $\beta$ generates a dense cyclic subgroup in $Z_1$, which we denote $B$. We define a multiplicative character $\phi_t: B \to \mathbb{T}$ such that $\phi(\beta) = e^{2\pi i t}$; since $t$ is irrational, $e^{2\pi i t}$ generates a dense cyclic subgroup in $\mathbb{T}$. It can be shown that $\phi_t$ can be extended to $\overline{\phi}_t: Z_1 \to \mathbb{T}$ so that $\left. \overline{\phi}_t \right|_{B} = \phi_t$.

We define $f_t: Z_{k+1} \to \mathbb{T}$, where $Z_{k+1}$ is the Host-Kra-Ziegler system of order $k+1$, such that $f_t = \overline{\phi}_t \circ \pi$, where $\pi: Z_{k+1} \to Z_1$ is the factor map. We observe that $f_t(T^{p(n)}x) = f_t(x) e^{2\pi i p(n)t}$, and
\[ \frac{1}{N} \sum_{n=1}^N f_1(T^{an}x)f_2(T^{bn}x)f_t(T^{p(n)}x) = \frac{f_t(x)}{N} \sum_{n=1}^N f_1(T^{an}x)f_2(T^{bn}x) e^{2\pi i p(n)t}. \]
Note that the left-hand side of the equation above converges by Leibman's result on pointwise convergence of polynomial actions on nilsystems \cite{Leibman}, so our claim holds for the case $t$ is irrational, and $f_1$ and $f_2$ are both continuous.

For the sake of completeness, we show how to extend the claim to functions in $L^\infty(\mu)$; we use the following approximation argument to show that given $f_1, f_2 \in L^\infty(\mu) \cap \mathcal{Z}_{k+1}$, there exists a set of full measure such that the averages converge for all $t \in \RR$. Suppose $(f_{1'}^i)$ and $(f_{2'}^i)$ be sequences of continuous functions in $\mathcal{Z}_{k+1}$ that converge to $f_1$ and $f_2$ in $L^2$-norm respectively. Define  $$f_j^i(x) = \left\{ 
 \begin{array}{ll} \min(f_{j'}^i(x), \| f_j \|_\infty) & \mbox{if } f_{j'}^i(x) \geq 0,\\
 \max(f_{j'}^i(x), -\| f_j \|_\infty) & \mbox{if } f_{j'}^i(x) < 0,
\end{array} \right.$$ for $j = 1, 2$. Then $f_j^i \to f_j$ in $L^2(\mu)$ as $i \to \infty$, and $\|f_j^i\|_\infty \leq \| f_j \|_\infty$ for each $i$ and $j = 1, 2$. If we write $f_1 = (f_1 - f_1^i) + f_1^i$ and $f_2 = (f_2 - f_2^i) + f_2^i$, we see that
\begin{align}\label{4terms}
& W_N(f_1, f_2, x, p, t) = \frac{1}{N} \sum_{n=1}^N f_1(T^{an}x)f_2(T^{bn}x)e^{2\pi i p(n) t} \nonumber \\
&= \frac{1}{N} \sum_{n=1}^N (f_1 - f_1^i)(T^{an}x)(f_2 - f_2^i)(T^{bn}x)e^{2\pi i p(n) t} + \frac{1}{N} \sum_{n=1}^N(f_1 - f_1^i)(T^{an}x)f_2^i(T^{bn}x)e^{2\pi i p(n) t} \\
&+ \frac{1}{N} \sum_{n=1}^Nf_1^i(T^{an}x)(f_2 - f_2^i)(T^{bn}x)e^{2\pi i p(n) t} + \frac{1}{N} \sum_{n=1}^Nf_1^i(T^{an}x)f_2^i(T^{bn}x)e^{2\pi i p(n) t}. \nonumber
\end{align}
Note that the fourth term on the right hand side converges since $f_1^i$ and $f_2^i$ are both continuous. If we apply the Cauchy-Schwarz inequality on the second term on the right-hand side, we obtain
\[ \sup_{t \in \RR}\left|\frac{1}{N} \sum_{n=1}^N(f_1 - f_1^i)(T^{an}x)f_2^i(T^{bn}x)e^{2\pi i p(n) t} \right| \leq \left(  \frac{1}{N} \sum_{n=1}^N\left|(f_1 - f_1^i)\right|^2(T^{an}x) \right)^{1/2} \left( \frac{1}{N} \sum_{n=1}^N\left|f_2^i\right|^2(T^{bn}x) \right)^{1/2}. \]
Therefore,
\begin{align}\label{CS-Est} 
	& \sup_{N \geq 1, t \in \RR} \left|\frac{1}{N} \sum_{n=1}^N(f_1 - f_1^i)(T^{an}x)f_2^i(T^{bn}x)e^{2\pi i p(n) t} \right| \\
	& \nonumber \leq \sup_{N \geq 1} \left(  \frac{1}{N} \sum_{n=1}^N\left|(f_1 - f_1^i)\right|^2(T^{an}x) \right)^{1/2} \left( \frac{1}{N} \sum_{n=1}^N\left|f_2^i\right|^2(T^{bn}x) \right)^{1/2} 
\end{align}
Since $\| f_2^i \|_\infty \leq \| f_2 \|_\infty$ for each $i$, we see that
\[ \left( \frac{1}{N} \sum_{n=1}^N\left|f_2^i\right|^2(T^{bn}x) \right)^{1/2} \leq \| f_2 \|_\infty. \]
We take the integral on the both sides of the equation ($\ref{CS-Est}$), and use H\"{o}lder's inequality and the maximal ergodic theorem (cf. Theorem 1.8 of \cite{AssaniWWET}) to obtain the following estimate:
\begin{align}\label{secondTerm}
&\int \sup_{N \geq 1, t \in \RR}\left|\frac{1}{N} \sum_{n=1}^N(f_1 - f_1^i)(T^{an}x)f_2^i(T^{bn}x)e^{2\pi i p(n) t} \right| d\mu(x) \\
&\leq \int \left(\sup_{N}  \frac{1}{N} \sum_{n=1}^N\left|(f_1 - f_1^i)\right|^2(T^{an}x) \right)^{1/2} \left(\sup_{N} \frac{1}{N} \sum_{n=1}^N\left|f_2^i\right|^2(T^{bn}x) \right)^{1/2} d\mu(x) \nonumber\\
&\leq \|f_2\|_\infty  \left( \int \sup_{N} \frac{1}{N} \sum_{n=1}^N\left|f_1 - f_1^i\right|^2(T^{an}x) d\mu(x) \right)^{1/2}  \nonumber\\
&\leq 2 \|f_2\|_\infty \|f_1 - f_1^i \|_2. \nonumber
\end{align}
We can obtain similar inequalities for the first and the third terms of ($\ref{4terms}$): We have
\begin{equation}\label{firstTerm} \int \sup_{N \geq 1, t \in \RR}\left| \frac{1}{N} \sum_{n=1}^{N} (f_1-f_1^i)(T^{an}x)(f_2-f_2^i)(T^{bn}x)e^{2\pi i p(n)t} \right| d\mu(x) \leq 4\|f_1 - f_1^i\|_2 \|f_2 - f_2^i\|_2 \end{equation}
and
\begin{equation}\label{thirdTerm} \int \sup_{N \geq 1, t \in \RR}\left| \frac{1}{N} \sum_{n=1}^{N} f_1^i(T^{an}x)(f_2-f_2^i)(T^{bn}x)e^{2\pi i p(n)t} \right| d\mu(x) \leq 2\|f_1\|_\infty \|f_2 - f_2^i\|_2. \end{equation}
We note that
\begin{align*}
0 &\leq \sup_{t \in \RR} \left(\limsup_{N \to \infty} Re\left( W_N(f_1, f_2, x, p, t)\right) - \liminf_{N \to \infty} Re \left(W_N(f_1, f_2, x, p, t)\right) \right) \\
&\leq 2\liminf_{i \to \infty} \left( \sup_{N \geq 1, t \in \RR}\left| \frac{1}{N} \sum_{n=1}^N (f_1-f_1^i)(T^{an}x)(f_2 - f_2^i)(T^{bn}x) e^{2\pi i p(n) t}\right| \right. \\
&+ \sup_{N \geq 1, t \in \RR}\left| \frac{1}{N} \sum_{n=1}^N (f_1-f_1^i)(T^{an}x)f_2^i(T^{bn}x) e^{2\pi i p(n) t}\right| \\
&+ \left. \sup_{N \geq 1, t \in \RR}\left| \frac{1}{N} \sum_{n=1}^N f_1^i(T^{an}x)(f_2 - f_2^i)(T^{bn}x) e^{2\pi i p(n) t}\right| \right),
\end{align*}
%where
%\[ \limsup_{N \to \infty} W_N(f_1, f_2, x, p, t) = \limsup_{N \to \infty} Re(W_N(f_1, f_2, x, p, t)) + \limsup_{N \to \infty}Im(W_N(f_1, f_2, x, p, t))i, \]
%and $\liminf$ is defined similarly. 
Note that the fourth term of $(\ref{4terms})$ vanishes since $f_1^i$ and $f_2^i$ are both continuous and we already know the averages converge. We would like to show that for $\mu$-a.e. $x \in X$,
\begin{equation}\label{sup-inf} \sup_{t \in \RR} \left(\limsup_{N \to \infty} Re(W_N(f_1, f_2, x, p, t)) - \liminf_{N \to \infty} Re( W_N(f_1, f_2, x, p, t)) \right) \leq 0 \end{equation}
using Fatou's lemma and the estimates we obtained from $ (\ref{secondTerm}), (\ref{firstTerm})$, and $(\ref{thirdTerm})$. 
Indeed,
\begin{align}
& 2\int \liminf_{i \to \infty} \left( \sup_{N \geq 1, t \in \RR}\left| \frac{1}{N} \sum_{n=1}^N (f_1-f_1^i)(T^{an}x)(f_2 - f_2^i)(T^{bn}x) e^{2\pi i p(n) t}\right| \right. \label{LHS}\\
&+ \sup_{N \geq 1, t \in \RR}\left| \frac{1}{N} \sum_{n=1}^N (f_1-f_1^i)(T^{an}x)f_2^i(T^{bn}x) e^{2\pi i p(n) t}\right| \nonumber \\
&+ \left. \sup_{N \geq 1, t \in \RR}\left| \frac{1}{N} \sum_{n=1}^N f_1^i(T^{an}x)(f_2 - f_2^i)(T^{bn}x) e^{2\pi i p(n) t}\right| \right) d\mu(x) \nonumber\\
&\leq 2\liminf_{i \to \infty}\left( \int \sup_{N \geq 1, t \in \RR}\left| \frac{1}{N} \sum_{n=1}^N (f_1-f_1^i)(T^{an}x)(f_2 - f_2^i)(T^{bn}x) e^{2\pi i p(n) t}\right| d\mu(x)\right. \label{RHS}\\
&+\int \sup_{N \geq 1, t \in \RR}\left| \frac{1}{N} \sum_{n=1}^N (f_1-f_1^i)(T^{an}x)f_2^i(T^{bn}x) e^{2\pi i p(n) t}\right| d\mu(x) \nonumber\\
&+ \left. \int \sup_{N \geq 1, t \in \RR}\left| \frac{1}{N} \sum_{n=1}^N f_1^i(T^{an}x)(f_2 - f_2^i)(T^{bn}x) e^{2\pi i p(n) t}\right| d\mu(x) \right) = 0 \nonumber
\end{align}
since each integrals in $(\ref{RHS})$ is bounded by either $\|f_1 - f_1^i\|_2$, $\|f_2 - f_2^i\|_2$, or a product of both, which goes to $0$ as $i \to \infty$.  Since inside the integral of ($\ref{LHS}$) is nonnegative, we know that
\begin{align*}
&\liminf_{i \to \infty} \left(\sup_{N \geq 1, t \in \RR}\left| \frac{1}{N} \sum_{n=1}^N (f_1-f_1^i)(T^{an}x)(f_2 - f_2^i)(T^{bn}x) e^{2\pi i p(n) t}\right| \right. \\
&+ \sup_{N \geq 1, t \in \RR}\left| \frac{1}{N} \sum_{n=1}^N (f_1-f_1^i)(T^{an}x)f_2^i(T^{bn}x) e^{2\pi i p(n) t}\right| \\
&+ \left. \sup_{N \geq 1, t \in \RR}\left| \frac{1}{N} \sum_{n=1}^N f_1^i(T^{an}x)(f_2 - f_2^i)(T^{bn}x) e^{2\pi i p(n) t}\right| \right) = 0.
\end{align*}
Therefore, $(\ref{sup-inf})$ is established, and we use the similar argument to show that for $\mu$-a.e. $x \in X$,
\[ \sup_{t \in \RR} \left( \limsup_{N \to \infty} Im(W(f_1, f_2, x, p, t)) - \liminf_{N \to \infty} Im(W(f_1, f_2, x, p, t)) \right) = 0. \]
This shows that $W_N(f_1, f_2, x, p, t)$ converges for any $f_1, f_2 \in L^\infty(\mu)\cap \mathcal{Z}_{k+1}$ and $t$ irrational.

Hence it suffices to prove the case when $t$ is rational, and in fact, we only need to prove the convergence for a fixed $t \in \QQ$.

\begin{lemma}\label{rational}
To show that the average in ($\ref{generalCV}$) converges off a single null-set, it suffices to show that for each $t \in \QQ$, the averages in ($\ref{generalCV}$) converges for $\mu$-a.e. $x \in X$ .
\end{lemma}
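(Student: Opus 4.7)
The proof plan is essentially a countable union argument, exploiting the fact that $\QQ$ is countable together with the irrational case that has already been established off a single null set.

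Here is how I would proceed. First, I would invoke the result already established earlier in this section, namely that the averages $W_N(f_1, f_2, x, p, t)$ converge as $N \to \infty$ for every irrational $t \in \RR$ off a single null set, call it $X_{\text{irr}}$. This was the content of the extension-to-$L^\infty$ approximation argument (using \eqref{secondTerm}, \eqref{firstTerm}, \eqref{thirdTerm} and Fatou's lemma). So the problem reduces to dealing with rational $t$.

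Next, assume the hypothesis of the lemma: for each $t \in \QQ$, there is a full-measure set $X_{f_1, f_2, p, t} \subset X$ on which $W_N(f_1, f_2, x, p, t)$ converges. Because $\QQ$ is countable, the intersection
\[ X_{\QQ} := \bigcap_{t \in \QQ} X_{f_1, f_2, p, t} \]
is a countable intersection of full-measure sets and is therefore itself of full measure. On $X_{\QQ}$ the averages converge for every rational $t$ simultaneously.

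Finally, set $X_{f_1, f_2, p} := X_{\text{irr}} \cap X_{\QQ}$. This is a full-measure set, and for every $x \in X_{f_1, f_2, p}$ the averages $W_N(f_1, f_2, x, p, t)$ converge as $N \to \infty$ for every $t \in \RR$, which is the desired conclusion of Theorem \ref{BothInZk+1}. There is no real obstacle here, since the argument is purely set-theoretic once the irrational case is in hand; the only point to emphasize is that the null set from the irrational case does not depend on $t$, so it may be safely absorbed into the countable union, yielding a single exceptional set.
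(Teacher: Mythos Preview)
Your proposal is correct and follows essentially the same route as the paper: invoke the already-established irrational case (off a single null set independent of $t$), take the countable intersection $\bigcap_{t\in\QQ} V_t$ over the rationals, and intersect the two to get a single full-measure set on which convergence holds for every $t\in\RR$. If anything, you are slightly more careful than the paper, which writes ``for all $x\in X$'' in the irrational case rather than explicitly carrying along the null set $X\setminus X_{\mathrm{irr}}$.
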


\begin{proof}
We know that if $t$ is irrational, then ($\ref{generalCV}$) converges for all $x \in X$. Suppose $t$ is a fixed rational number, and we know that $V_t$ is a set of full measure such that ($\ref{generalCV}$) converges. Then
\[ V = \bigcap_{t \in \QQ} V_t \]
is a set of full measure since it is a countable intersection of sets of full measures. Hence, if $x \in V$, then $(\ref{generalCV})$ converges independent of $t \in \RR$.
\end{proof}

Before we present the proof for the case $t$ is rational, we first demonstrate a few examples on how to prove the Wiener-Winter convergence for polynomials of degree two and three. We believe these examples help the readers understand the proof for the general case.

The key ingredient of the proof for the case $t \in \QQ$ is the use of skew-product transformation on $\mathbb{T}^2$. First, we prove this for the case $p(n) = n^2$. We first find an appropriate Anzai skew product \cite{Anzai} on $\mathbb{T}^2$, and we apply this on the product of two functions on $X \times \mathbb{T}^2$, and take the averages. We know that this average converges a.e. by Bourgain's double recurrence theorem \cite{BourgainDR}, and the right hand side, after appropriate choice of constants and cancellations, will be the Wiener-Wintner averages for the case of polynomial $p(n) = n^2$. We apply Fubini's theorem and lemma $\ref{rational}$ to conclude the proof for $p(n) = n^2$.

We fix $t \in \QQ$ (in fact, this argument works for any $t \in \RR$). Consider the following skew-product transformation on $\mathbb{T}^2$:
\begin{equation}
R_\alpha (y, z) = (y + 2\alpha, z + y + \alpha), \end{equation}
where we will determine $\alpha$ in terms of $t$.  Since the transformations $y \mapsto y + \alpha$ and $z \mapsto z + y$ are both measure preserving with respect to the Lebesgue measure $m$ on $\mathbb{T}$, $R_\alpha$ is a measure-preserving skew-product transformation on $\mathbb{T}^2$ with respect to the product measure $m \otimes m$. We note that
\[ R_\alpha^n(y, z) = (y + 2n\alpha, z + ny + n^2\alpha). \]
Consider functions $F_j$ on $X \times \mathbb{T}^2$ to itself for $j = 1, 2$ such that
\[ F_j(x, y, z) = f_j(x)e^{2\pi i p_j y}e^{2\pi i q_j z}, \]
where $p_j$ and $q_j$ are constants to be determined. Let $U = T \times R_\alpha$. Then we compute
\begin{align}\label{n-sq}
& \nonumber \frac{1}{N} \sum_{n=1}^N F_1(U^{an}(x, y, z))F_2(U^{bn}(x, y, z)) \\
&= \frac{1}{N} \sum_{n=1}^N f_1(T^{an}x)f_2(T^{bn}x) e^{2\pi i p_1 (y + 2an\alpha)} e^{2\pi i p_2 (y + 2bn\alpha)}e^{2\pi i q_1 (z + any + a^2n^2\alpha)} e^{2\pi i q_1 (z + bny + b^2n^2\alpha)} \nonumber \\
&= \frac{e^{2\pi i (p_1 + p_2)y} e^{2\pi i (q_1 + q_2)z}}{N} \sum_{n=1}^N f_1(T^{an}x) f_2(T^{bn}x) e^{4\pi i (ap_1 + bp_2)n\alpha} e^{4\pi i (aq_1 + bq_2)ny}e^{2\pi i ( a^2q_1 + b^2q_2) n^2\alpha}.
\end{align}
If we set $C_{yz} = e^{2\pi i (p_1 + p_2)y} e^{2\pi i (q_1 + q_2)z}$, $aq_1 = -bq_2$ and $ap_1 = -bp_2$, then we obtain
\begin{align*}
& \frac{1}{N} \sum_{n=1}^N F_1(U^{an}(x, y, z))F_2(U^{bn}(x, y, z)) = \frac{C_{yz}}{N}\sum_{n=1}^N f_1(T^{an}x)f_2(T^{bn}x) e^{2\pi i b(b - a)q_2 n^2 \alpha}
\end{align*}

Note that the left hand side of the equation converges as a result of Bourgain's double recurrence theorem. Hence, the right hand side converges for $\mu \otimes m \otimes m$-a.e. $(x, y, z) \in X \times \mathbb{T}^2$. By applying Fubini's theorem and setting $\alpha =t/[b(b - a)q_2]$, we can conclude that
\begin{equation}\label{WWn-sq} \frac{1}{N}\sum_{n=1}^N f_1(T^{an}x)f_2(T^{bn}x) e^{2\pi i n^2 t} \end{equation}
converges for $\mu$-a.e. $x \in X$. Thus, there exists a set of full-measure $V_{t}$ such that for all $x \in V_{t}$, the averages in ($\ref{WWn-sq}$) converges. By applying lemma $\ref{rational}$, we obtain the desired result.

To show that the convergence holds for any second-degree polynomial (i.e. $p(n) = c_2 n^2 + c_1 n + c_0$), we apply the same skew-product that we used for the case $p(n) = n^2$, but we start with the averages
$$\frac{1}{N} \sum_{n=1}^N F_1(U^{an}(x,y,z)) F_2(U^{bn}(x,y,z)) e^{-2\pi i[(aq_1+ bq_2)ny - c_0\alpha]}$$ and we apply Theorem $\ref{WWDR}$ to prove the convergence. We let $ap_1 + bp_2 = c_1$ and $a^2q_1 + b^2q_2 = c_2$. Then we compute
\[ \frac{1}{N} \sum_{n=1}^N F_1(U^{an}(x, y, z))F_2(U^{bn}(x, y, z)) e^{-2\pi i [(aq_1 + bq_2) ny - c_0\alpha]} = \frac{C_{yz}}{N} \sum_{n=1}^N f_1(T^{an}x)f_2(T^{bn}x) e^{2\pi i (c_2n^2 + c_1n + c_0) \alpha}.
\]
Theorem 1.1 tells us that we can find a single null set in $X\times T^2$ off which the average 
$$\frac{1}{N} \sum_{n=1}^N F_1(U^{an}(x,y,z)) F_2(U^{bn}(x,y,z)) e^{2\pi ins}$$
converge for every s in R. In particular for $s = -(aq_1 + bq_2)y + c_0\alpha$, so the left hand side of this equation converges. Hence, after applying Fubini's theorem, the right-hand side of the equation converges for $\mu$-a.e. $x \in X$. By setting $\alpha = t$ and applying lemma $\ref{rational}$, we can conclude that the double recurrence Wiener Wintner result holds for a second degree polynomial.

%For the remainder of the paper, we let $a = 1$ and $b = 2$ for the sake of readability.

Now we will prove this for the case $p(n) = n^3$. Again, as in the case of $p(n) = n^2$, we find an appropriate Anzai skew-product, and we apply this on functions $F_1$ and $F_2$ mentioned above. In order to show the convergence of the averages, we use the Wiener-Wintner result for the polynomial $p(n) = n^2$.

We consider the following skew product transformation on $\mathbb{T}^2$:
\[ R_{\alpha, 2} (y, z) = (y + 6\alpha, z + y^2 - \alpha^2). \]
Then we compute that
\[ R_{\alpha, 2}^n(y, z) = \left( y + 6n\alpha, z + ny^2 + 12n^2y\alpha - 18ny\alpha + 2n^3\alpha^2 - 3n^2\alpha^2 \right). \]
Let $U = T \times R_{\alpha, 2}$. If we compute the average 
\[\frac{1}{N} \sum_{n=1}^N F_1(U^{an}(x, y, z)) F_2(U^{bn}(x, y, z))\exp \left[ -12\pi i (b-a)(bq_2y\alpha - 3bq_2) \alpha^2 )n^2 \right], \]
we obtain Wiener-Wintner averages with the polynomial $p(n) = n^3$: First we compute that
\begin{align}\label{n-cubed}
& \frac{1}{N} \sum_{n=1}^N F_1(U^{an}(x, y, z)) F_2(U^{bn}(x, y, z)) \exp \left[ -12\pi i (b-a)(bq_2y\alpha - 3bq_2) \alpha^2 )n^2 \right] \nonumber \\
&= \frac{1}{N} \sum_{n=1}^N f_1(T^{an}x)f_2(T^{bn}x) \exp(2\pi i (p_1 + p_2)y) \exp(2\pi i (6ap_1 + 6bp_2)n\alpha)\nonumber \\
&\cdot \exp \left(2\pi i q_1 (z + any^2 + 6a^2n^2y\alpha - 6any\alpha + 12a^3n^3\alpha^2 - 18a^2n^2\alpha^2) \right) \nonumber \\
&\cdot \exp \left(2\pi i q_2 (z + bny^2 + 6b^2n^2y\alpha - 6bny\alpha + 12b^3n^3\alpha^2 - 18b^2n^2\alpha^2) \right) \\
&\cdot \exp \left[ -12\pi i (b-a)(bq_2y\alpha - 3bq_2) \alpha^2 )n^2 \right]\nonumber\\
&= \frac{C_{y, z}}{N} \sum_{n=1}^N f_1(T^{an}x)f_2(T^{bn}x) \exp(2\pi i (6ap_1 + 6bp_2)n\alpha) \exp(2\pi i (aq_1 + bq_2)ny)\nonumber \\
&\cdot \exp \left(2\pi i (6a^2q_1 + 6b^2q_2) n^2y\alpha \right)\exp \left(2\pi i (-6aq_1 - 6bq_2) ny\alpha \right) \nonumber \\
&\cdot \exp \left(2\pi i (12a^3q_1 + 12b^3q_2) n^3\alpha^2 \right)\exp \left(2\pi i (-18a^2q_1 - 18b^2q_2) n^2\alpha^2 \right) \nonumber\\
&\cdot \exp \left[ -12\pi i (b-a)(bq_2y\alpha - 3bq_2) \alpha^2 )n^2 \right], \nonumber
\end{align}
where we denoted $C_{y, z} = \exp(2\pi i (p_1 + p_2)y)\exp(2\pi i (q_1 + q_2)z)$. After setting $p_1 = -p_2$ and $aq_1 = -bq_2$,
we obtain
\begin{align} \label{n-cubed-CV}
& \frac{1}{N} \sum_{n=1}^N F_1(U^{an}(x, y, z)) F_2(U^{bn}(x, y, z))\exp \left[ -12\pi i (b-a)(bq_2y\alpha - 3bq_2) \alpha^2 n^2 \right] \nonumber \\
&= \frac{C_{y, z}}{N} \sum_{n=1}^N f_1(T^{an}x) f_2(T^{bn}x) \exp(2\pi i (12bq_2(b^2-a^2)) n^3 \alpha^2).
\end{align}
On the left-hand side, we set $s = (b-a)(bq_2y\alpha - 3bq_2) \alpha^2$, and apply ($\ref{WWn-sq}$), the previous Wiener-Winter result for the polynomial $p(n) = n^2$, to show that the left hand side converges for $\mu \otimes m \otimes m$-a.e. $(x, y, z) \in X \times \mathbb{T}^2$. Therefore, by applying Fubini's theorem, the right-hand side converges for $\mu$-a.e. $x \in X$. So in particular, if  We set $\alpha^2 = t/[12bq_2(b^2-a^2)]$, we have concluded that there exists a set of full measure $V_t$ such that for all $x \in V_t$, the average
\[ \frac{1}{N} \sum_{n=1}^N f_1(T^nx)f_2(T^{2n}x) \exp(2\pi i n^3 t)\]
converges. After we apply lemma $\ref{rational}$, we obtain the desired convergence result for $p(n) = n^3$.

Now we show that the result holds for any degree-three polynomial: $p(n)  = c_3n^3 + c_2n^2 + c_1n + c_0$. We change the exponential term on the left-hand side of $(\ref{n-cubed-CV})$ to 
$$\exp \left[ -12\pi i (b-a)(bq_2y\alpha - 3bq_2 + c_2) \alpha^2n^2 + c_1\alpha^2n + \alpha^2c_0 \right],$$
 and we compute
\begin{align}\label{deg3poly}
& \frac{1}{N} \sum_{n=1}^N F_1(U^{an}(x, y, z)) F_2(U^{bn}(x, y, z))\exp \left[ -12\pi i (b-a)(bq_2y\alpha - 3bq_2 + c_2) \alpha^2n^2 + c_1\alpha^2n + \alpha^2c_0 \right] \nonumber \\
&= \frac{C_{y, z}}{N} \sum_{n=1}^N f_1(T^nx) f_2(T^{2n}x) \exp(2\pi i [12bq_2(b^2-a^2) n^3 + c_2n^2 + c_1n + c_0]\alpha^2)
\end{align}
Note that the left hand side of the equation converges $\mu \otimes m^2$-a.e. $(x, y, z) \in X \times \mathbb{T}^2$ by the Wiener-Wintner result that we obtained for degree-two polynomials (in this case, $t = 1$). Hence, by applying Fubini's theorem, the right hand side converges for $\mu$-a.e. $x \in X$. Choose $q_2$ so that $c_3 = 12bq_2(b^2 - a^2)$ and $\alpha^2 = t$, then apply the similar argument using Fubini's theorem and lemma $\ref{rational}$ to obtain the result for degree-three polynomials.

Here we present the proof for any polynomial with real coefficients. The main idea behind the proof is similar to the case of polynomials of degree two and three.
\begin{proof}[Proof of Theorem $\ref{BothInZk+1}$ when $t$ is rational]
We will prove this case using induction. Let $t \in \QQ$ fixed. The base case $k = 1$ is proven in Theorem 7.3 of \cite{WWDR}. Assume the Wiener-Wintner result holds for the case $1 \leq k \leq m$. It suffices to show that the inductive step holds for the case $p(n) = n^{m+1}$. Indeed, consider the following skew-product transformation on $\mathbb{T}^2$:
\[ R_\alpha(y, z) = (y + \alpha, z + y^m). \]
Since the maps $y \mapsto y + \alpha$ and $z \mapsto z + y^m$ are both measure preserving on $\mathbb{T}$ (with respect to the Lebesgue probability measure $m$), we know that $R_\alpha$ is also measure-preserving on $\mathbb{T}^2$ by \cite{Anzai}. In fact, we can compute that
\[ R_\alpha^n(y, z) = \left(y + n\alpha, z + \sum_{l=0}^{n-1} (y + l\alpha)^m \right) = \left(y + n\alpha, z + ny^m + \Phi_{n-1}(y, \alpha) + \left( \sum_{l=0}^{n-1} l^m\right)\alpha^m \right), \]
where 
\[ \Phi_{n-1}(y, \alpha) = \sum_{q=1}^{m-1} {m \choose q} y^{m - q} \alpha^q \left( \sum_{l=0}^{n-1} l^q \right). \]
Note that, by Faulhaber's formula, the sum $\displaystyle \sum_{l=0}^{n-1} l^q$ is a polynomial of degree $q+1$ for $q = 1, \ldots, m$. In particular, we denote
\[ \phi(n) = \sum_{l=0}^{n-1} l^m \]
to be the $m+1$-th degree polynomial of $n$.

Consider functions $F_j$ on $X \times \mathbb{T}^2$ to itself for $j = 1, 2$ such that
\[ F_j(x, y, z) = f_j(x)\exp(2\pi i p_j y)\exp(2\pi i q_j z), \]
where $p_j$ and $q_j$ are constants to be determined. We note that $q_1 \phi(an) + q_2 \phi(bn)$ is a polynomial of degree $m+1$, so there exists a nonzero real constant $c_{m+1}$ and an $m$-th degree polynomial $\psi(n)$ such that
\[ q_1 \phi(an) + q_2 \phi(bn) = c_{m+1} n^{m+1} + \psi(n). \]
Let $U = T \times R_\alpha$. Then we compute
\begin{align*}
& \frac{1}{N} \sum_{n=1}^N F_1(U^{an}(x, y, z))F_2(U^{bn}(x, y, z))\exp(-2\pi i (q_1 \Phi_{an-1}(y, \alpha) + q_2 \Phi_{bn-1} (y, \alpha) + \psi(n))) \\
&= \frac{1}{N} \sum_{n=1}^N f_1(T^{an}x) f_2(T^{bn}x) \exp(2\pi i p_1(y + an\alpha))\exp(2\pi i p_2(y + bn\alpha))\\
&\cdot \exp(2\pi i q_1 (z + any^m + \Phi_{an-1}(y, \alpha) + \phi(an)\alpha^m))\exp(2\pi i q_2 (z + bny^m + \Phi_{bn-1}(y, \alpha) + \phi(bn)\alpha^m))\\
&\cdot \exp(-2\pi i (q_1 \Phi_{an-1}(y, \alpha) + q_2 \Phi_{bn-1} (y, \alpha) + \psi(n)))\\
&= \frac{C_{yz}}{N} \sum_{n=1}^N f_1(T^{an}x) f_2(T^{bn}x)\exp(2\pi i (ap_1 + bp_2)n\alpha)\exp(-2\pi i (q_1 \Phi_{an-1}(y, \alpha) + q_2 \Phi_{bn-1} (y, \alpha)))\\
&\cdot \exp(2\pi i (aq_1 + bq_2)ny^m) \exp(2\pi i (q_1 \Phi_{an-1}(y,\alpha) + q_2 \Phi_{bn-1} (y, \alpha))) \exp(2\pi i (q_1\phi(an) + q_2\phi(bn))\alpha^m) \\
&\cdot \exp(-2\pi i (q_1 \Phi_{an-1}(y, \alpha) + q_2 \Phi_{bn-1} (y, \alpha)+ \psi(n))),
\end{align*}
where $C_{y z} = \exp(2\pi i (p_1 + p_2)y)\exp(2\pi i (q_1 + q_2)z)$. If we set $p_1 = -2p_2$, $q_1 = -2q_2$, then we obtain
\begin{align*}
& \frac{1}{N} \sum_{n=1}^N F_1(U^{an}(x, y, z))F_2(U^{bn}(x, y, z)) \exp(-2\pi i (q_1 \Phi_{an-1}(y, \alpha) + q_2 \Phi_{bn-1} (y, \alpha)+ \psi(n))) \\
&= \frac{C_{y, z}}{N} \sum_{n=1}^N f_1(T^{an}x)f_2(T^{bn}x)\exp(2\pi i c_{m+1} n^{m+1}\alpha^m).
\end{align*}
Note that the left hand side converges $\mu \otimes m^2$-a.e. as $N \to \infty$ by the inductive hypothesis, so the right hand side converges $\mu \otimes m^2$-a.e. as well. By Fubini's theorem, the right hand side converges for $\mu$-a.e. $x \in X$. So if we set $\alpha^m  = t/c_{m+1}$, we have proved the Wiener-Wintner result for a polynomial $p(n) = n^{m+1}$ after applying lemma $\ref{rational}$. It is straightforward to show that the result holds for any $m+1$-degree polynomial.
\end{proof}
\bibliographystyle{plain}
\bibliography{WWDR_nil}

\end{document}